\numberwithin{equation}{section}
\theoremstyle{plain}
\newtheorem{theorem}{Theorem}[section]
\newtheorem{lemma}[theorem]{Lemma}
\newtheorem{corollary}[theorem]{Corollary}
\newtheorem{conjecture}[theorem]{Conjecture}
\newtheorem{question}[theorem]{Question}
\newtheorem{claim}{Claim}
\theoremstyle{definition}
\theoremstyle{remark}
\newtheorem{remark}[theorem]{Remark}
\def\N{\mathbb{N}}
\def\R{\mathbb{R}}
\def\X{X_{1}}
\def\Y{X_{2}}
\DeclareMathOperator{\area}{area}
\DeclareMathOperator{\Isom}{Isom}
\def\G{\Gamma}
\def\g{\gamma}
\def\geo{\partial}
\DeclareMathOperator{\slp}{sl}
\title{Rigidity of convex co-compact diagonal actions}
\author{Subhadip Dey \and Beibei Liu}
\begin{document}
\maketitle
\begin{abstract}
Kleiner-Leeb and Quint showed that convex subsets in higher-rank symmetric spaces are very rigid compared to rank 1 symmetric spaces. Motivated by this, we consider convex subsets in products of proper CAT(0) spaces $X_1\times X_2$ and show that for any two convex co-compact actions $\rho_i(\G)$ on $X_i$, where $i=1, 2$,  if the diagonal action of $\G$ on $X_1\times X_2$ via $\rho=(\rho_1, \rho_2)$ is also convex co-compact, then under a suitable condition, $\rho_1(\G)$ and $\rho_2(\G)$ have the same marked length spectrum.
 
\end{abstract}

\section{Introduction}

Let $X$ be a proper CAT(0) space, and $\G$ be an abstract group.
A properly discontinuous action of $\G$ on $X$ by isometries is called {\em convex co-compact} if there exists a nonempty closed $\G$-invariant convex subset $C\subset X$ such that the quotient $C/\G$ is compact.
The action of $\G$ on this convex set carries many essential algebraic, geometric, and dynamical information about $\G$. A smallest such quotient $C/\G$ is called a {convex core} of $X/\G$. 
Convex co-compact (and, more generally, geometrically finite) isometry groups of rank 1 symmetric spaces have been extensively studied, exhibiting rich geometric and dynamical characteristics.
While there are many interesting examples of convex co-compact actions on rank 1 symmetric spaces and, more broadly, on Hadamard manifolds with negatively pinched curvatures \cite{bow3}, Kleiner-Leeb \cite{KLeeb} and Quint \cite{Quint} showed that for higher rank symmetric spaces $X$, discrete group invariant convex subsets of $X$ are highly constrained. 

This article is motivated by the following question of Delzant \cite[Problem 22]{problemlist}:

\begin{question}\label{ques:delzant}
Suppose that $\X$ and $\Y$ are two $\textup{CAT}(0)$ spaces for $i=1, 2$, and there exist representations $\rho_i: \G\rightarrow \Isom(X_i)$, $i=1,2$, such that $\rho_i(\G)$ acts on $X_i$ properly discontinuously and co-compactly. 
Does there exist a compact convex core for the diagonal action of $\G$ on $X_1\times X_2$?
\end{question}

\Cref{ques:delzant} has a negative answer in general: For example, if $S$ is a closed orientable surface of genus at least two and $\rho_1,\rho_2 : \pi_1(S)\to \operatorname{PSL}(2,\R)$ are two distinct (up to conjugation by elements in $\operatorname{PSL}(2, \R)$) discrete and faithful representations, then $\rho_1$ and $\rho_2$ have distinct marked length spectra; see Thurston \cite[\S 3]{thurston1998minimal}.
Although $\rho_1$, $\rho_2$ are co-compact, it follows from \Cref{thm:main} below that the induced diagonal action of $\pi_1(S)$  on $\mathbb{H}^2\times\mathbb{H}^2$ cannot be convex co-compact.
Nevertheless, it would be interesting to find exact conditions under which \Cref{ques:delzant} has a positive answer. For example, see \Cref{cor} below.

Our main result is as follows:

\begin{theorem}\label{thm:main}
    Let $X_1$ and $X_2$ be proper $\textup{CAT}(0)$ spaces, $\G$ be an abstract group acting properly discontinuously on $X_1$ and $X_2$ by isometries, and let $\rho_i : \G \to \Isom(X_i)$ for $i=1,2$ denote the induced homomorphisms. Suppose that
    there exists an element $\alpha\in \G$ such that $\rho_i(\alpha)$ is a rank one isometry of $X_i$, $i=1,2$. If the action of $\G$ on $X_1\times X_2$ via $(\rho_1,\rho_2)$ is convex co-compact, then, up to scaling, $\rho_1$ and $\rho_2$ have the same marked length spectrum.
\end{theorem}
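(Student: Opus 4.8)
The plan is to translate the marked–length–spectrum statement into a statement about the limit set on the boundary of the product, and to use the rank one element $\alpha$ together with the convex cocompactness (and, crucially, the discreteness of each $\rho_i$) to rigidify that limit set. Write $\ell_i(\gamma)$ for the translation length of $\rho_i(\gamma)$ on $X_i$ and $\ell(\gamma)$ for that of $\rho(\gamma)=(\rho_1(\gamma),\rho_2(\gamma))$. Let $C\subseteq X_1\times X_2$ be a $\Gamma$-invariant closed convex set with $C/\Gamma$ compact. First I would record the routine consequences of the hypotheses: $C$ is a complete, proper $\mathrm{CAT}(0)$ space on which $\Gamma$ acts properly discontinuously and cocompactly, so every $\rho(\gamma)$ is semisimple, $\operatorname{Min}(\rho(\gamma))=\operatorname{Min}(\rho_1(\gamma))\times\operatorname{Min}(\rho_2(\gamma))$, and $\ell(\gamma)^2=\ell_1(\gamma)^2+\ell_2(\gamma)^2$; moreover each $\rho_i$ has finite kernel and discrete image, so an infinite order $\gamma$ has $\ell_i(\gamma)>0$ for $i=1,2$. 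Identify $\partial_\infty(X_1\times X_2)=\partial_\infty X_1\ast\partial_\infty X_2$ (spherical join) and let $\theta\colon\partial_\infty(X_1\times X_2)\to[0,\pi/2]$ be the join parameter. Since each $\rho(\gamma)$ respects the product structure it acts on the join preserving $\theta$, so $\theta$ is a continuous $\Gamma$-invariant function; and for infinite order $\gamma$ the forward endpoint $\gamma^+$ of an axis of $\rho(\gamma)$ satisfies $\theta(\gamma^+)=\arctan(\ell_2(\gamma)/\ell_1(\gamma))$. Thus it suffices to prove that $\theta$ is constant on the limit set $\Lambda=\partial_\infty C$: writing $\theta_0=\theta(\alpha^+)=\arctan(\ell_2(\alpha)/\ell_1(\alpha))\in(0,\pi/2)$, this gives $\ell_1(\gamma)/\ell_2(\gamma)=\cot\theta_0=\ell_1(\alpha)/\ell_2(\alpha)$ for all infinite order $\gamma$ (and both sides vanish for torsion elements), which is the assertion.

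Suppose, toward a contradiction, that some $\xi_0\in\Lambda$ has $\theta(\xi_0)\neq\theta_0$. Let $A_i:=p_i(\ell_\alpha)\subseteq X_i$ be the rank one axis of $\rho_i(\alpha)$ (where $\ell_\alpha\subseteq\operatorname{Min}_C(\alpha)$ is an axis of $\rho(\alpha)$), with endpoints $\alpha_i^{\pm}\in\partial_\infty X_i$, and set $F:=A_1\times A_2$, a flat on which $\alpha$ acts by the Euclidean translation of vector $(\ell_1(\alpha),\ell_2(\alpha))$ (in arclength coordinates). Using that $\rho_i(\alpha)$ is rank one -- hence has North--South dynamics on $\partial_\infty X_i$ -- the sequence $\alpha^n\xi_0$ converges (with $\theta$ fixed at $\phi:=\theta(\xi_0)$) to a point $\xi\in\Lambda$ which is fixed by $\alpha$ and whose coordinates lie among $\alpha_i^{\pm}$, so $\xi\in\partial_\infty F$. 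Choose a geodesic ray $\sigma\subseteq C$ with $\sigma(\infty)=\xi$. Because $A_1,A_2$ are contracting, $\sigma$ eventually lies in a bounded neighbourhood of $F$; applying the powers of $\alpha$ (which fix $\xi$ and translate $F$ by $(\ell_1(\alpha),\ell_2(\alpha))$) and taking the union of the rays $\alpha^m\sigma\subseteq C$, one checks that a bounded neighbourhood of $C$ contains a flat half-plane $H\subseteq F$ whose boundary line is parallel to $\ell_\alpha$ (this is where $\phi\neq\theta_0$ is used, to guarantee the relevant two directions span $F$). A flat half-plane closing argument -- the ideal boundary of $H$ is a Tits geodesic of length $\pi$ contained in $\partial_\infty C$, hence spanned by a flat half-plane in $C$ -- then yields a genuine flat half-plane $P\subseteq C$ with $\partial P=\ell_\alpha$; replacing $\alpha$ by $\alpha^2$ if necessary, $\alpha P=P$, so $P\subseteq\operatorname{Min}(\alpha)$. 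Since $\rho_i(\alpha)$ is rank one, $\operatorname{Min}(\rho_i(\alpha))=A_i\times K_i$ with $K_i$ compact, so $\operatorname{Min}(\alpha)=\R^2\times(\text{compact})$; therefore $P$ lies within bounded distance of $F$, and $p_i(P)$ lies within bounded distance of the \emph{line} $A_i$.

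Now I would extract a flat plane from $P$ using cocompactness. Fixing a basepoint $o\in C$, pick $p_n\in P$ at distance $n$ from $\partial P$ and $\gamma_n\in\Gamma$ with $\gamma_n p_n$ in a fixed ball about $o$; then the flat half-planes $\gamma_n P\subseteq C$ have boundaries receding to infinity, so a subsequence converges (Arzelà--Ascoli, using properness of $C$) to a flat \emph{plane} $P_\infty\subseteq C$. Because $P\subseteq\operatorname{Min}(\alpha)$, the conjugate $\gamma_n\alpha\gamma_n^{-1}$ translates $\gamma_n P$ and thus displaces $o$ by a bounded amount, so by proper discontinuity it is constant, equal to some $g\in\Gamma$, along a subsequence; $g$ preserves $P_\infty$ and acts on it as a translation by $(\ell_1(\alpha),\ell_2(\alpha))$. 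Each $\rho_i(g)$ is conjugate to $\rho_i(\alpha)$, hence rank one, so $\operatorname{Min}(\rho_i(g))$ has compact cross-section and contains no flat plane; since $P_\infty\subseteq\operatorname{Min}(g)=\operatorname{Min}(\rho_1 g)\times\operatorname{Min}(\rho_2 g)$, each $p_i(P_\infty)$ is a \emph{line} $L_i$ (an axis of $\rho_i(g)$, hence contracting), and $P_\infty=L_1\times L_2$. Finally, $Z_\Gamma(g)$ acts properly on $\operatorname{Min}(g)=\R^2\times(\text{compact})$; since $\Z^2$ cannot act properly by isometries on $\R\times(\text{compact})$, $Z_\Gamma(g)$ contains no $\Z^2$, so by Bieberbach it is virtually cyclic with $\langle g\rangle$ of finite index. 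Choose $q_n\in P_\infty$ escaping along a direction of $P_\infty$ perpendicular to $(\ell_1(\alpha),\ell_2(\alpha))$, and $\delta_n\in\Gamma$ with $\delta_n q_n$ near $o$. As before $\delta_n g\delta_n^{-1}$ has bounded displacement and is therefore constant along a subsequence, so $\delta_m^{-1}\delta_n\in Z_\Gamma(g)$ lies in $\langle g\rangle$ along a further subsequence, say $\delta_n=\delta_m g^{k_n}$. Then $\delta_n q_n=\delta_m\bigl(q_n+k_n(\ell_1(\alpha),\ell_2(\alpha))\bigr)$ stays in a fixed bounded set, whereas the component of $q_n+k_n(\ell_1(\alpha),\ell_2(\alpha))$ perpendicular to $(\ell_1(\alpha),\ell_2(\alpha))$ equals that of $q_n$ and tends to infinity -- a contradiction. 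Hence $\theta\equiv\theta_0$ on $\Lambda$, and the theorem follows.

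The main obstacle is the geometric heart of the second and third paragraphs: converting a limit point of the "wrong angle'' into a flat half-plane of $C$ supported on the axis of $\alpha$, and then ruling such a configuration out. This needs some care with $\mathrm{CAT}(0)$ technology -- the product structure of $\operatorname{Min}$-sets and the boundedness of cross-sections of rank one $\operatorname{Min}$-sets, half-plane/half-flat closing inside $C$, and the limiting arguments driven by cocompactness -- and it must arrange that the flat objects produced are coarsely contained in products of contracting geodesics, which is exactly what makes the discreteness and the finiteness of $\ker\rho_i$ bite. One also has to deal with the degenerate cases (endpoints coinciding with $\alpha_i^{\mp}$, or $\theta(\xi_0)\in\{0,\pi/2\}$), but these only make the relevant $\alpha$-fixed limit point easier to produce and funnel into the same argument.
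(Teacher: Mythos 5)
Your opening moves coincide with the paper's: reduce the theorem to showing that every slope (your join parameter $\theta$) equals that of $\alpha$, and use the north--south dynamics of the rank one isometries $\rho_i(\alpha)$ to push an offending direction to a point of $\partial_\infty F$, $F=A_1\times A_2$, thereby producing geodesic rays in $C$ that lie in a bounded neighbourhood of $F$ but point transversally to the axis of $\rho(\alpha)$. The divergence --- and the gap --- is in how you rule this configuration out. You invoke a ``flat half-plane closing argument'': the ideal boundary of the coarse half-plane is a Tits arc of length $\pi$ in $\partial_\infty C$, ``hence spanned by a flat half-plane in $C$.'' This is not a citable off-the-shelf result and you give no proof; the closing theorems of this flavour (Kleiner--Leeb, Balser--Lytchak, Leeb) concern top-dimensional round spheres in Tits boundaries, not a half-circle with prescribed endpoints, and even granting some such half-plane nothing forces its boundary to be $\ell_\alpha$ itself, nor forces it to be $\alpha$-invariant: flat half-planes bounded by a given axis are in general neither unique nor permuted in a finite orbit, so ``replacing $\alpha$ by $\alpha^2$'' does not give $\alpha P=P$, and without that you do not get $P\subseteq\operatorname{Min}(\rho(\alpha))$. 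Everything in your third paragraph (the extraction of the flat plane $P_\infty$, the $\operatorname{Min}$-set product decomposition, the centralizer/Bieberbach finish) rests on these two unproved claims, so the argument does not close as written.

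The repair is to notice that you never needed a genuine flat half-plane. Since $N_D(F)$ is convex, the convex hull in $C$ of $\ell_{\rho(\alpha)}$ together with the rays $\alpha^{-m}\sigma$ already yields a subset of $C$ within bounded Hausdorff distance of a Euclidean half-plane in $F$; cocompactness of $C/\rho(\G)$ then forces quadratically many points of the orbit $\rho(\G)x$ in $N_{D'}(F)\cap B_r(x)$. But projecting to $X_1$ sends all such points into a $D'$-neighbourhood of the single geodesic $A_1$, where proper discontinuity of $\rho_1$ allows only linearly many orbit points in $B_r(x_1)$ --- a contradiction. This counting argument is exactly the paper's \Cref{lem:thm:main} (via \Cref{claim:one,claim:two}), and it replaces your closing lemma and your entire third paragraph. (Your centralizer argument plays, in spirit, the role of the paper's \Cref{lem:thm:main_rephrased}, which treats by a quasi-isometry argument the one degenerate case where the axes of $\rho_i(\beta)$ share both endpoints with those of $\rho_i(\alpha)$; in the paper that lemma is only needed for that special case, not as the engine of the whole proof.) The rest of your setup --- the reduction to constancy of $\theta$ on $\partial_\infty C$, the identification $\theta(\gamma^+)=\arctan(\ell_2(\gamma)/\ell_1(\gamma))$, and the use of contraction to confine $\sigma$ to a neighbourhood of $F$ --- is sound.
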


\begin{remark}
If $C\subset X_1\times X_2$ is a $\rho(\G)$-invariant convex set such that $C/\rho(\G)$ is compact, then $\rho_i(\G)$ acts co-compactly on the projection of $C$ to $X_i$ for $i=1,2$. Thus,  the convex co-compactness of the diagonal action implies that both $\rho_1(\G)$, $\rho_2(\G)$ are convex co-compact in $X_1, X_2$ respectively. 
\end{remark}

In the setting of Theorem \ref{thm:main}, for every infinite order element $\g\in \G$, $\rho_1(\g), \rho_2(\g)$ and $\rho(\g)=(\rho_1(\g), \rho_2(\g))$ are hyperbolic isometries. We associcate a vector $(\ell(\rho_1(\g)), \ell(\rho_2(\g))) \in \R^2$ to each element $\g\in \G$ where $\ell(\rho_i(\g))$ is the translation length of $\rho_i(\g)$ in $X_i$ for $i=1,2$. That the representations $\rho_1$ and $\rho_2$ have the same marked length spectrum up to scaling means that there exists $\lambda>0$ such that for all $\g\in \G$, $\ell(\rho_1(\g))=\lambda \,\ell(\rho_2(\g))$. For the related definitions, see Section \ref{sec:pre}. 

Notably, the well-known marked length spectrum conjecture of Burns and Katok \cite[Problem 3.1]{BKatok} states that if two closed manifolds $(M, g)$ and $(M_0, g_0)$ with negative sectional curvatures have the same marked length spectrum, then they are isometric. 
Therefore, a positive answer to this conjecture will imply the following:

\begin{conjecture}\label{conj}
    Let $(X_1,d_1)$ and $(X_2,d_2)$ be negatively curved Hadamard manifolds and let $\G$ be a torsion-free group acting properly discontinuously and co-compactly on $X_1$ and $X_2$.
    If the diagonal action of $\G$ is convex co-compact, then there exists a $\G$-equivariant isometry $\phi: (X_1,d_1) \to (X_2,\lambda d_2)$ for some $\lambda>0$.
\end{conjecture}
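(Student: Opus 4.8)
The plan is to transfer everything to the ideal boundary of the product, which is the spherical join $\partial_\infty X_1 \ast \partial_\infty X_2$: a boundary point is $\cos(\tb)\,\xi_1 \oplus \sin(\tb)\,\xi_2$ with $\tb\in[0,\pi/2]$ and $\xi_i\in\partial_\infty X_i$, and the ray from $(o_1,o_2)$ toward it is $t\mapsto(c_1(t\cos\tb),c_2(t\sin\tb))$ for $c_i$ the ray from $o_i$ to $\xi_i$. Since the $\G$-actions on $X_1$ and $X_2$ are properly discontinuous, the kernels $\ker\rho_i$ are finite and no $\rho_i(\g)$ is an infinite order elliptic isometry; together with the fact that $\rho(\g)$ is semisimple (the diagonal action being co-compact on a convex core $C$), every infinite order $\g$ has $\rho_i(\g)$ hyperbolic with $\ell(\rho_i(\g))>0$, and $\rho(\g)$ is axial with attracting fixed point $\cos(\tb_\g)\,\g_1^+ \oplus \sin(\tb_\g)\,\g_2^+$, where $\g_i^+\in\partial_\infty X_i$ is the attracting fixed point of $\rho_i(\g)$ and $\tb_\g=\arctan\!\big(\ell(\rho_2(\g))/\ell(\rho_1(\g))\big)$. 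So the theorem reduces to: the ``slope'' $\tb_\g$ is the same for all infinite order $\g\in\G$.

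The first key observation is that convex co-compactness confines the limit set $\Lambda=\partial_\infty C$ to a compact part of the join, away from its two poles. By the Remark, $\rho_i(\G)$ acts properly discontinuously and co-compactly on $C_i:=\overline{\pi_i(C)}\subset X_i$, so by the \v{S}varc--Milnor lemma the orbit map $\g\mapsto\rho_i(\g)o_i$ is a quasi-isometric embedding of $(\G,|\cdot|_\G)$ into $C_i$; hence $d_1(\rho_1(\g)o_1,o_1)$ and $d_2(\rho_2(\g)o_2,o_2)$ are each coarsely comparable to $|\g|_\G$, and so to one another. Consequently there is $\epsilon_0>0$ with every $\xi\in\Lambda$ having join-coordinate $\tb\in[\epsilon_0,\pi/2-\epsilon_0]$; in particular $\Lambda$ contains no ``pure'' point lying in $\partial_\infty X_1$ or $\partial_\infty X_2$ alone.

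Now suppose the slope is not constant, so some infinite order $\delta$ has $\tb_\delta\neq\tb_\alpha$; swapping the two factors if necessary we may assume $\tb_\delta>\tb_\alpha$, and (replacing $\delta$ by a conjugate) that its fixed points avoid $\alpha_i^-$. Let $A_i$ be an axis of the rank one isometry $\rho_i(\alpha)$, $F=A_1\times A_2\subset\mathrm{Min}(\rho(\alpha))$ the resulting $2$-flat, and $L_\alpha\subset F$ the corresponding axis of $\rho(\alpha)$, chosen inside $C$. Because $\rho_i(\alpha)$ is rank one it acts on $X_i\cup\partial_\infty X_i$ with North--South dynamics, so the attracting fixed point $\rho(\alpha)^N\delta^+$ of $\alpha^N\delta\alpha^{-N}$ converges, as $N\to\infty$, to $\zeta:=\cos(\tb_\delta)\,\alpha_1^+\oplus\sin(\tb_\delta)\,\alpha_2^+\in\Lambda$; and $\alpha^+=\cos(\tb_\alpha)\,\alpha_1^+\oplus\sin(\tb_\alpha)\,\alpha_2^+\in\Lambda$ as well. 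Taking $o\in L_\alpha$ and using uniqueness of geodesic rays in $\mathrm{CAT}(0)$ spaces, the rays in $C$ from $o$ to $\alpha^+$ and to $\zeta$ are exactly the Euclidean rays in the flat quarter-plane $A_1^+\times A_2^+$ at angles $\tb_\alpha$ and $\tb_\delta$; by convexity $C$ contains the flat sector they bound, and applying $\rho(\alpha)^n$ ($n\in\Z$) and passing to the convex hull, $C$ contains a flat half-plane $H\subset F$ with $\partial H=L_\alpha$. Since $H$ spans a full angle $\pi$ inside $F$, it contains the ray $\{o_1\}\times A_2^+$, which is asymptotic to the pure point $\alpha_2^+\in\partial_\infty X_2$. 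Thus $\Lambda=\partial_\infty C$ contains a pure point, contradicting the previous paragraph. Hence $\tb_\g\equiv\tb_0$ is constant, giving $\ell(\rho_1(\g))=(\cot\tb_0)\,\ell(\rho_2(\g))$ for all $\g$.

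The routine steps are the semisimplicity and positivity of translation lengths; the precise North--South dynamics of a rank one isometry on the $\mathrm{CAT}(0)$ bordification and the resulting convergence $\rho(\alpha)^N\delta^+\to\zeta$; the identification of the rays to $\alpha^+$ and $\zeta$ with explicit rays in the flat $F$; and the elementary planar fact that an $\rho(\alpha)$-invariant convex subset of $F$ containing such a sector contains a half-plane bounded by $L_\alpha$. The conceptual heart — and the step I expect to demand the most care — is the dichotomy for the limit set: convex co-compactness pins $\Lambda$ inside the region of the join where both factor-displacements grow linearly, while the rank one element $\alpha$ is precisely what lets one slide a limit point out to the degenerate locus of the join as soon as two distinct slopes occur. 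Ensuring that the North--South/limiting argument delivers an honest pure boundary point of the join — rather than merely slopes accumulating in the interior — is exactly where the rank one hypothesis is used essentially, and is the crux of the argument.
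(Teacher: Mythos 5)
The statement you are proving is stated in the paper as a \emph{conjecture}, and the paper offers no proof of it: it is explicitly presented as a consequence of Theorem~\ref{thm:main} \emph{combined with} the (open) Burns--Katok marked length spectrum conjecture. Your argument, whatever its merits, only ever targets the first half of that combination. Everything you do --- the spherical-join description of $\partial_\infty(X_1\times X_2)$, the confinement of the limit set away from the poles via \v{S}varc--Milnor, the north--south dynamics pushing a second slope out to a pure boundary point --- culminates in the conclusion $\ell(\rho_1(\g))=(\cot\bar\theta_0)\,\ell(\rho_2(\g))$ for all $\g$, i.e.\ equality of marked length spectra up to scale. That is (in the negatively curved manifold setting, where the rank one hypothesis is automatic) precisely the conclusion of Theorem~\ref{thm:main}, not of Conjecture~\ref{conj}. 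The conjecture asks for a $\G$-equivariant isometry $\phi:(X_1,d_1)\to(X_2,\lambda d_2)$, and the passage from ``same marked length spectrum'' to ``isometric'' is exactly the Burns--Katok conjecture, which is open in general and known only in special cases (dimension $2$ by Otal and Croke, locally symmetric targets by Hamenst\"adt, and a local version by Guillarmou--Lefeuvre); those cases yield Corollary~\ref{cor}, not the full conjecture. Your proposal never constructs $\phi$, and no argument available today can, so there is a genuine and unbridgeable gap between what you prove and what the statement asserts.

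On the part you do address: your route to the slope rigidity is genuinely different from the paper's and is plausible. The paper works inside a single $2$-flat $F=l_1\times l_2$ and shows by an area-growth/orbit-counting argument (Lemma~\ref{lem:thm:main} and its two claims) that $C\cap F$ is Hausdorff-close to a ray or line, then funnels a limit of translated axes of $\beta$ into that flat; you instead confine $\partial_\infty C$ to a compact band of join-coordinates and derive a contradiction by producing a flat half-plane in $C$ whose boundary at infinity hits a pole of the join. Both hinge on the same mechanism (rank one dynamics dragging the second slope into the parallel set of $\alpha$'s axis), but your version quietly skips the degenerate case where $\delta$ shares all ideal fixed points with $\alpha$ --- your ``replace $\delta$ by a conjugate'' fix needs the group to be non-elementary, which holds here but should be said --- whereas the paper isolates that case and handles it separately via the parallel-set/cross-section argument of Lemma~\ref{lem:thm:main_rephrased}.
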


The marked length spectrum conjecture has been settled in dimension 2 by Otal \cite{Otal} and, independently, Croke \cite{Croke} (see also \cite{CFF}). 
In higher dimensions,  Hamenst\"adt \cite{Hamen} proved this conjecture when $g_0$  is a locally symmetric metric. A recent breakthrough of Guillarmou and Lefeuvre proved a certain local version of the marked length spectrum conjecture \cite{GF}.  Combining Theorem \ref{thm:main} with some of the known cases of the marked length spectrum conjecture, one obtains:

\begin{corollary}\label{cor}
Let $(X_1,d_1)$ and $(X_2,d_2)$ be negatively curved Hadamard manifolds, $\G$ be a torsion-free group acting properly discontinuously and co-compactly on $X_1$ and $X_2$, and $\rho_i:\G \to \operatorname{Isom} (X_i)$, $i=1,2$, be the induced homomorphisms.
Suppose that one of the following holds:
\begin{enumerate}
    \item $\dim X_1 = 2$.
    \item $X_1$ is a symmetric space. 
\end{enumerate}
Then, the diagonal action $\rho=(\rho_1, \rho_2)$ on $X_1\times X_2$ is convex co-compact if and only if there exists a $\G$-equivariant isometry $\phi: (X_1,d_1) \to (X_2,\lambda d_2)$ for some $\lambda>0$. 

\end{corollary}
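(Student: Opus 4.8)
The plan is to combine \Cref{thm:main} with the known cases of the marked length spectrum conjecture.

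\emph{The implication $(\Leftarrow)$.} Suppose $\phi\colon(X_1,d_1)\to(X_2,\lambda d_2)$ is a $\Gamma$-equivariant isometry for some $\lambda>0$. I would show that its graph $G=\{(x,\phi(x)):x\in X_1\}$ is a closed, convex, $\rho(\Gamma)$-invariant subset of $X_1\times X_2$ with $G/\rho(\Gamma)$ compact. Convexity is the only nonformal point: since $\phi$ scales $d_1$ by the factor $1/\lambda$, it carries the constant-speed $d_1$-geodesic $c$ from $x$ to $y$ to the constant-speed $d_2$-geodesic from $\phi(x)$ to $\phi(y)$, so $t\mapsto(c(t),\phi(c(t)))$ is a constant-speed geodesic of the product joining $(x,\phi(x))$ and $(y,\phi(y))$ and contained in $G$. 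Invariance follows from $\gamma\cdot(x,\phi(x))=(\rho_1(\gamma)x,\phi(\rho_1(\gamma)x))$, and the map $x\mapsto(x,\phi(x))$ induces a homeomorphism $X_1/\rho_1(\Gamma)\to G/\rho(\Gamma)$, so the latter is compact. Hence the diagonal action is convex co-compact.

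\emph{The implication $(\Rightarrow)$.} Now assume the diagonal action is convex co-compact. Because $\Gamma$ acts properly discontinuously and cocompactly on the Gromov-hyperbolic space $X_i$ it is word-hyperbolic, and since it is torsion-free each of its nontrivial elements acts on $X_i$ as a hyperbolic isometry; as $X_i$ is strictly negatively curved, no complete geodesic of $X_i$ bounds a flat half-plane, so $\rho_i(\alpha)$ is a rank one isometry for every $\alpha\in\Gamma\setminus\{1\}$ (and $\Gamma$ is infinite, $X_1$ being noncompact). The hypotheses of \Cref{thm:main} therefore hold, producing $\lambda>0$ with $\ell(\rho_1(\gamma))=\lambda\,\ell(\rho_2(\gamma))$ for every $\gamma\in\Gamma$; equivalently, the cocompact actions $\rho_1\acts(X_1,d_1)$ and $\rho_2\acts(X_2,\lambda d_2)$ have the same marked length spectrum. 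The quotients $M_1=X_1/\rho_1(\Gamma)$ and $M_2=(X_2,\lambda d_2)/\rho_2(\Gamma)$ are then closed negatively curved manifolds, each a $K(\Gamma,1)$; in particular $\dim X_1=\dim X_2$, since the dimension of a closed aspherical manifold equals the $\mathbb{Z}/2$-cohomological dimension of its fundamental group, and the identification of $\pi_1(M_1)$ and $\pi_1(M_2)$ with $\Gamma$ is realized by a homotopy equivalence $M_1\to M_2$ matching the two marked length spectra.

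\emph{Invoking rigidity.} In case (1), $M_1$ --- hence $M_2$, by the dimension count --- is a closed negatively curved surface, and the marked length spectrum rigidity of Otal \cite{Otal} and Croke \cite{Croke} yields an isometry $M_1\to M_2$ in the homotopy class of the above marking. In case (2), $M_1$ is a closed negatively curved locally symmetric space, and Hamenst\"adt's theorem \cite{Hamen}, applied to the closed negatively curved manifold $M_2$ (which has the same marked length spectrum as $M_1$), yields an isometry $M_2\to M_1$ in the homotopy class of the marking. In either case, lifting to the universal covers gives a $\Gamma$-equivariant isometry $(X_1,d_1)\to(X_2,\lambda d_2)$, which completes the proof. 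The point that I expect to require the most care is the bookkeeping of markings: one must confirm that ``same marked length spectrum up to scaling'' as delivered by \Cref{thm:main} is precisely the input demanded by the surface and locally symmetric rigidity theorems, and that the isometry they return is compatible with the $\Gamma$-actions so as to lift equivariantly --- this, together with the comparison of dimensions, being where the real content lies, the remainder being formal.
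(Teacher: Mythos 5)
Your proposal is correct and follows essentially the same route the paper intends: the paper gives no explicit proof of \Cref{cor} beyond the remark that it follows by combining \Cref{thm:main} with the Otal--Croke and Hamenst\"adt marked length spectrum rigidity theorems, which is exactly what you do. Your additional details --- the graph of the rescaled equivariant isometry as the convex cocompact core for the $(\Leftarrow)$ direction, the rank-one observation in strictly negative curvature, and the dimension/marking bookkeeping --- are all sound and fill in what the paper leaves implicit.
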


\begin{remark}
 The marked length spectrum conjecture has been formulated and studied for the class of locally compact CAT(-1) spaces, see for example \cite{HP97,BL17,AB87,CM87,CL,Lafont1}. Interested readers could combine our main theorem \ref{thm:main} to various settings where the marked length spectrum conjecture is formulated.  When $X_i$ are CAT(-1) spaces where $i=1,2$, the assumption of rank 1 isometries in Theorem \ref{thm:main} is vacuously satisfied. 
\end{remark}

\subsection*{Acknowledgement} We would like to thank Hee Oh and  Shi Wang for interesting discussions. We are also grateful to Jean-Franc\c{o}is Lafont for helpful comments on our earlier draft.  The second author is partially supported by the NSF grant DMS-2203237.

\section{Isometries of CAT(0) spaces}\label{sec:pre}

Let $X$ be a proper CAT(0) space. The \emph{displacement function} $d_g: X\rightarrow \mathbb{R}_{+}$ of an isometry $g$ of $X$ is defined by $d_g(x)\coloneqq 
d(x, gx)$. The \emph{translation length} of $g$ is the number $\ell(g)\coloneqq 
\inf\{ d_{g}(x) \mid x\in X\} $. Based on whether $d_{g}$ attains the infimum $\ell(g)$, the isometries of $X$ are classified as follows (see for example \cite[Definition 6.3]{Bridson-Haefliger}). 
An isometry $g : X\to X$ is called:  
\begin{enumerate}
    \item {\em parabolic}  if $ d_{g}(x)$ does not attain its infimum $\ell(g)$ for  any point  $x\in X$.
    \item {\em semi-simple} if it is not parabolic. Semi-simple isometries are further classified as follows. A semisimple isometry $g$ of $X$ is called:
    \begin{enumerate}
    \item {\em elliptic} if it has a fixed point in $X$.
    \item {\em hyperbolic} if it has no fixed points in $X$. In this case, there is a (possibly non-unique) {\em axis} of $g$, i.e., a $g$-invariant complete geodesic line $l_g$ in $X$ where the displacement function $d_g$ achieves its minimum. Moreover, any $g$-invariant complete geodesic in $X$ is an axis of $g$ and any two axes of $g$ are parallel.
    \end{enumerate}
\end{enumerate}

Furthermore, a hyperbolic isometry $g$ is called {\em rank one} if an (equivalently, any) axis $l_g$ of $g$ has the {\em contraction property}, i.e., there exists a constant $K>0$ such that the nearest-point projection into $l_g$ of any metric ball in $X$, disjoint from $l_g$,  has diameter at most $K$.
    Equivalently (by \cite[Thm. 5.4]{bestvina2009characterization}), $l_g$ does not bound a flat half-plane in $X$.
Rank one isometries $g: X\to X$ act with north-south dynamics on the visual boundary $\geo X$ \cite[Lem. 4.4]{hamenstadt2009rank}. More precisely, there are two fixed points $g_\pm \in \geo X$ such that
\[
 g^n\vert_{\geo X\setminus \{g_-\}} \to \operatorname{const}_{g^+}
 \quad \text{as } n\to \infty
\]
uniformly on compact sets. Here $\geo X$ denotes the visual boundary of $X$ which, as a set, is the equivalence class of geodesic rays. The topology on $\geo X$ is the {\em cone topology}; see \cite{Bridson-Haefliger} for the definition of this topology and further details on it.


\begin{lemma}\label{lem:hyper}
    If $\G<\Isom(X)$ is convex co-compact, then any infinite order element $\g$ of $\G$ is a hyperbolic isometry of $X$. 
    
    Moreover, if $C\subset X$ is a nonempty $\G$-invariant convex set such that $C/\G$ is compact, then $C$ contains an axis of $\g$.
\end{lemma}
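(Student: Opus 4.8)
The plan is to reduce everything to the convex set $C$ itself: since $X$ is proper and $C\subset X$ is closed and convex, $C$ is again a proper $\textup{CAT}(0)$ space, and $\G$ acts on it properly discontinuously (by restriction of the ambient action) and cocompactly. Fixing an infinite order element $\g\in\G$, I would first show that the displacement function $d_\g$ attains its infimum at a point of $C$, then rule out the elliptic case to conclude hyperbolicity, and finally build an axis of $\g$ inside $C$.

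For the attainment step — which I expect to be the crux — I would argue as follows. Using that the nearest-point projection $\pi_C\colon X\to C$ is $1$-Lipschitz and commutes with $\g$ (because $\g$ preserves $C$), one gets $\inf_{x\in C}d_\g(x)=\inf_{x\in X}d_\g(x)=\ell(\g)$. Pick a minimizing sequence $x_n\in C$ with $d_\g(x_n)\to\ell(\g)$. By cocompactness of $C/\G$ there is $R>0$, a basepoint $x_0\in C$, and elements $g_n\in\G$ with $g_nx_n\in\overline B(x_0,R)$; conjugating, the elements $h_n:=g_n\g g_n^{-1}\in\G$ then move the points $g_nx_n\in\overline B(x_0,R)$ by the uniformly bounded amount $d_\g(x_n)$. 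Proper discontinuity of the action on $X$ forces $\{h_n\}$ to be finite, so after passing to a subsequence $h_n$ is a constant element $h$ conjugate to $\g$; properness of $X$ lets me extract a further subsequence with $g_nx_n\to y$, and $y\in C$ since $C$ is closed. Then $d_h(y)=\lim_n d(g_nx_n,h_ng_nx_n)=\ell(\g)=\ell(h)$, so $d_h$ attains its infimum at $y$. Conjugating back, $d_\g$ attains its infimum at $z:=g_n^{-1}y$, which lies in $C$ because $C$ is $\G$-invariant, and $d_\g(z)=d_h(y)=\ell(\g)$.

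Once this is in place, the rest is routine. The element $\g$ is now semi-simple; it cannot be elliptic, for otherwise the infinite cyclic group $\langle\g\rangle$ would fix a point of $X$, contradicting proper discontinuity. Hence $\g$ is hyperbolic and $\ell(\g)>0$. For the ``moreover'' part, since $z\in C$ realizes the minimal displacement $\ell(\g)>0$, the bi-infinite concatenation $\bigcup_{n\in\Z}\g^n[z,\g z]$ is a $\g$-invariant complete geodesic line — i.e.\ an axis of $\g$ — by the standard description of minimal displacement sets of hyperbolic isometries (see \cite[Ch.~II.6]{Bridson-Haefliger}); and each segment $\g^n[z,\g z]=[\g^nz,\g^{n+1}z]$ lies in $C$ by convexity of $C$ together with $\G$-invariance of the points $\g^nz\in C$, so this axis is contained in $C$.

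The only delicate point is the attainment argument in the second paragraph, where cocompactness on $C$ (to compress the minimizing sequence into a fixed ball), proper discontinuity on $X$ (to pin the conjugates $h_n$ down to a single element), and properness of $X$ (to extract a limit) must be combined. I remark that when the $\G$-action on the whole of $X$ is cocompact, this semisimplicity statement is classical (cf.\ \cite[Ch.~II.6]{Bridson-Haefliger}); here only the action on the convex subset $C$ is cocompact, which is why I would run the argument directly as above.
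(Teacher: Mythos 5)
Your proof is correct and follows essentially the same route as the paper: restrict attention to the closed convex set $C$, where the action is cocompact, and use the $\G$-equivariant $1$-Lipschitz nearest-point projection to see that the minimal displacement over $C$ equals that over $X$, so that an axis in $C$ is an axis in $X$. The only difference is that where the paper cites the classical semisimplicity result \cite[Prop.~II.6.10]{Bridson-Haefliger} for the cocompact action on $C$, you reprove it inline via the standard minimizing-sequence/conjugation argument, which is a faithful (and correct) unpacking of that citation.
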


\begin{proof}
    Since $C/\G$ is compact and $C$ is CAT(0) (being a convex subset of a CAT(0) space), the restriction $\g\vert_C$ of $\g$ to $C$ is semisimple (see \cite[Prop. II.6.10]{Bridson-Haefliger}).
    Moreover, since $\G$ acts on $X$ and, in particular, on $C$ properly discontinuously, $\g\vert_C$ is not elliptic.
    So, $\g\vert_C$ is hyperbolic.
    Finally, since the nearest point projection map $X\to C$ is a $\G$-equivariant $1$-Lipschitz map, we have $\inf\{ d(x,\g x):\ x\in X \} = \inf\{ d(x,\g x):\ x\in C \}$.
    So, it follows that any axis of $\g\vert_C$ in $C$ is also an axis of $\g$ in $X$.
    Thus $\g$ is a hyperbolic isometry of $X$ and it has an axis in $C$.
\end{proof}


\subsection{Slope}
Given two proper CAT(0) metric spaces $(\X, d_1)$ and $(\Y, d_2)$, let $r: [0,\infty) \to X_1\times X_2$ be a geodesic ray.
The projection of this ray into a factor $X_i$ is either a point or a geodesic ray (after a reparametrization), see \cite[Prop. I.5.3]{Bridson-Haefliger}. 
We define the {\em slope of $r$}, $\slp(r)\in [0,\infty]$, as follows:
\begin{enumerate}
    \item If the projection of (the image of) $r$ into $X_2$ is a point, then $\slp(r) \coloneqq \infty$.
    \item Otherwise, 
    \begin{equation}\label{eqn:slp_ray}
        \slp(r) \coloneqq \frac{d_{1}(r_1(0),\, r_1(1) )}{d_{2}(r_2(0),\, r_2(1) )},
    \end{equation}
    where $r_1$ (resp. $r_2$) denotes the composition of $r$ with the projection of $X_1\times X_2$ into $X_1$ (resp. $X_2$).
\end{enumerate}

Similarly, if $l: (-\infty,\infty) \to X_1\times X_2$  is a geodesic line, then the slope of $l$ is defined as:
\[
 \slp(l) : = \slp(l\vert_{[0,\infty)}).
\]

\begin{remark}
    In \eqref{eqn:slp_ray}, one can replace $r_1(1)$ and $r_2(1)$ by $r_1(t)$ and $r_2(t)$, respectively, for any positive number $t$. Note that the value of the ratio will remain unchanged since the projections of $r$ to the factors $X_1$ and $X_2$ have constant speeds.
    Moreover, if $l$ is a geodesic line, then $\slp(l\vert_{(-\infty,0]}) = \slp(l) = \slp(l\vert_{[0,\infty)})$; so, $\slp(l)$ could also be defined as $\slp(l\vert_{(-\infty,0]})$.
\end{remark}

\begin{lemma}\label{lemma:slp}
    Asymptotic rays in $X_1\times X_2$ have the same slope.
\end{lemma}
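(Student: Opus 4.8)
The plan is to reduce to the fact, already recorded above, that both factor-projections of a geodesic ray are constant-speed curves, and then to observe that asymptoticity pins down each of these two speeds \emph{individually}, not merely their ratio.

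First I would write the two asymptotic rays as $r(t) = (r_1(t), r_2(t))$ and $r'(t) = (r_1'(t), r_2'(t))$, where $r_i$ and $r_i'$ denote the compositions of $r$, $r'$ with the projection $X_1\times X_2 \to X_i$. By \cite[Prop. I.5.3]{Bridson-Haefliger}, each $r_i$ is, after reparametrization, either a geodesic ray or a point; in either case $d_i(r_i(s), r_i(t)) = v_i\,|s-t|$ for a constant $v_i\ge 0$, and likewise $d_i(r_i'(s), r_i'(t)) = v_i'\,|s-t|$. By the definition of $\slp$ we have $\slp(r) = v_1/v_2$ and $\slp(r') = v_1'/v_2'$, with the convention that these are $\infty$ when the relevant second speed vanishes. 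Since $r$ and $r'$ are asymptotic there is a constant $D$ with $d(r(t), r'(t)) \le D$ for all $t\ge 0$, and, as $d_i$ is dominated by the product metric, $d_i(r_i(t), r_i'(t)) \le D$ for all $t$ and both $i$.

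Then I would fix $i$ and apply the inequality $|d_i(a,b) - d_i(c,d)| \le d_i(a,c) + d_i(b,d)$ to the four points $r_i(0), r_i(t), r_i'(t), r_i'(0)$, obtaining
\[
 \big| d_i(r_i(0), r_i(t)) - d_i(r_i'(0), r_i'(t)) \big| \;\le\; d_i(r_i(0), r_i'(0)) + d_i(r_i(t), r_i'(t)) \;\le\; 2D.
\]
The left-hand side equals $|v_i - v_i'|\,t$, so $|v_i - v_i'| \le 2D/t$ for every $t>0$; letting $t\to\infty$ gives $v_i = v_i'$. Hence $r$ and $r'$ share the same pair of factor-speeds $(v_1,v_2) = (v_1', v_2')$; in particular the second speed of $r$ vanishes iff that of $r'$ does, and in every case $\slp(r) = \slp(r')$.

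I do not expect a genuine obstacle here: the step needing the most care is simply the bookkeeping, namely that the two rays need not share an initial point (handled by the term $d_i(r_i(0), r_i'(0)) \le D$) and that the degenerate slope values $0$ and $\infty$ must be checked — but these are covered, since $v_i = v_i'$ forces the projection of $r$ to a factor to be constant exactly when that of $r'$ is. If one prefers to begin from the cone-topology description of asymptotic rays rather than from the uniform bound $d(r(t),r'(t))\le D$, one first notes that for geodesic rays in a CAT(0) space the two notions coincide.
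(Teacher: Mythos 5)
Your proof is correct and follows essentially the same route as the paper's: both arguments rest on the observation that $d_i(r_i(t),r_i'(t))\le d(r(t),r'(t))\le D$ and that the factor projections have constant speed. The paper shows the ratio $\slp(r)/\slp(r')$ tends to $1$ as $t\to\infty$, whereas you show the slightly sharper fact that the individual factor speeds coincide; this is a cosmetic difference in bookkeeping, and your handling of the degenerate slopes $0$ and $\infty$ matches the paper's.
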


\begin{proof}
Let $r, r' : [0,\infty) \to \X\times \Y$ be unit speed parameterized geodesic rays that are asymptotic, that is, there exists a constant $K \ge 0$ such that $d(r(t), r'(t))\leq K$ for all $t\geq 0$. 
We let $r_i, r'_i$ denote the projections of $r, r'$ in $(X_i, d_i)$ respectively for $i=1,2$. We claim that the Hausdorff distance between $r_i$ and $r'_i$ is bounded from above for both $i=1,2$ and $t\geq 0$. This is straightforward by observing that 
\begin{equation}\label{eqn1:lemma:slp}
d^2_{{1}}(r_1(t), r'_1(t))+d^2_{{2}}(r_2(t), r'_2(t)) = d^2(r(t), r'(t)) \leq K^2.
\end{equation}
If the projection of $r$ to $X_2$ (resp. $X_1$) is a point, then the projection of $r'$ to $X_2$ (resp. $X_1$) is also a point, and vice versa. In this case,
$$\slp(r)=\slp(r')=\infty \quad (\text{resp. } 0).$$
Otherwise, the projections of $r, r'$ to $X_1$ and $X_2$ are not points. In that case, $\slp(r),\slp(r')\in (0,\infty)$. Since 
$$d_{1}(r_1(0), r_1(t))=\slp(r) d_{2}(r_2(0), r_2(t)), \quad d_{1}(r'_1(0), r'_1(t))=\slp(r') d_{2}(r'_2(0), r'_2(t)).$$
for all $t\in\R$, we have that \[ \frac{\slp(r)}{\slp(r')} = \frac{d_{1}(r_1(0), r_1(t))}{d_{1}(r'_1(0), r'_1(t))}\cdot \frac{d_{2}(r'_2(0), r'_2(t))}{d_{2}(r_2(0), r_2(t))}  \]
By \eqref{eqn1:lemma:slp}, each ratio in the right-hand side of the above equation converges to $1$ as $t\to\infty$. Hence,  ${\slp(r)}={\slp(r')}$.
\end{proof}

For a hyperbolic isometry $g$ of the product of CAT(0)-spaces $X_1\times X_2$ , define the {\em slope} of $g$ by
\begin{equation}
    \slp (g) \coloneqq \slp (l_{g})
\end{equation}
where $l_g$ is an axis of $g$. The definition is independent of the choices of axes: Since any two axes of $g$ are parallel, by \Cref{lemma:slp}, they must have the same slope.






\section{Proof of \Cref{thm:main}} 
\label{sec:main}

Throughout this section, let $(X_i, d_i)$, $i=1,2$, be proper  CAT(0) spaces. Let $\G$ be an infinite abstract group acting on $X_i$, $i=1,2$, convex co-compactly by isometries and let $\rho_i: \G\rightarrow \Isom(X_i)$ denote the induced homomorphisms.
Thus, we have a diagonal action of $\G$ by isometries on the product $(X_1\times X_2, d_1\times d_2)$ given by the homomorphism $\rho : \G \to \Isom (X_1\times X_2)$, $\rho(\gamma) = (\rho_1(\gamma),\rho_2(\gamma))$, $\g\in\G$.
We assume that this action is convex co-compact, i.e.,
there exists a nonempty $\rho(\G)$-invariant convex subset $C\subset \X\times \Y$ such that the quotient $C/\rho(\G)$ is compact. 

The goal of this section is to prove \Cref{thm:main}. That is, under the following additional assumption, $\rho_1$ and $\rho_2$ have the same marked length spectrum up to a scalar multiple:
there exists an element $\alpha \in \Gamma$ such that $\rho_1(\alpha)$ and $\rho_2(\alpha)$ are rank-one isometries of $X_1$ and $X_2$, respectively.

Fix a base-point $x=(x_1, x_2)\in C$.
For $i=1,2$, let $l_i$ denote a complete geodesic line in $X_i$ passing through $x_i$.
Then 
\begin{equation}\label{eqn:flatF}
    F = l_1\times l_2\subset X_1\times X_2
\end{equation} is a $2$-flat (i.e., isometric to $\R^2$ with the Euclidean metric) passing through $x$.
The intersection
\[
C_F = F \cap  C
\]
is convex in $X_1\times X_2$.

\begin{lemma}\label{lem:thm:main}
Suppose that $C_F$ has an infinite diameter. Then, $C_F$ lies within a finite Hausdorff distance away from a geodesic ray or a complete geodesic line in $F$.
\end{lemma}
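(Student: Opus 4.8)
The plan is to combine elementary plane convexity with the co\nobreakdash-compactness of the diagonal action. Note first that $C_F$ (or, what is the same for the conclusion, its closure) is a closed convex subset of the Euclidean plane $F\cong\R^2$, and by hypothesis it is unbounded; we may also assume the convex set $C$ is closed. If $\dim C_F\le 1$ then $C_F$ is contained in a line, hence is a closed interval of that line, and an unbounded such interval is a ray or a line, so we are done. Thus we may assume $C_F$ is two\nobreakdash-dimensional, and I would run a dichotomy on the recession cone $R=\operatorname{rec}(C_F)\subset F$, a nonzero closed convex cone. If $R$ is a line, then $C_F$, being stable under translation by $R$, is a strip of finite width and therefore lies at finite Hausdorff distance from a line. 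If $R$ is a single ray and $C_F$ has bounded width transverse to $R$, then — being also bounded in the $-R$ direction, since $-R\not\subset R$ — it lies at finite Hausdorff distance from a ray. In every remaining case ($R$ two\nobreakdash-dimensional, or $R$ a ray but $C_F$ of unbounded transverse width) one checks directly that $C_F$ contains flat disks of arbitrarily large radius: go far out along the bisector of the wedge $R$, respectively far out along the \emph{spine} of the widening region, using convexity together with a fixed disk sitting inside $C_F$. So it suffices to show that $C_F$ cannot contain flat disks of arbitrarily large radius.

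Suppose it did, with $D_n\subset C_F\subset F$ a flat disk of radius $\to\infty$. Since $C/\rho(\G)$ is compact, pick $\gamma_n\in\G$ so that $\rho(\gamma_n)D_n$ meets a fixed compact set; then $\rho(\gamma_n)D_n\subset C$ is a flat disk of radius $\to\infty$ lying in the product flat $\rho(\gamma_n)F=\rho_1(\gamma_n)l_1\times\rho_2(\gamma_n)l_2$ and contained in a bounded region. By properness of $X_1\times X_2$ and Arzel\`a--Ascoli, after passing to a subsequence $\rho_i(\gamma_n)l_i$ converges uniformly on compacta to a complete geodesic line $m_i\subset X_i$, and $\rho(\gamma_n)D_n$ converges to a two\nobreakdash-dimensional flat contained in $m_1\times m_2$, hence equal to $m_1\times m_2$; as $C$ is closed this gives $m_1\times m_2\subset C$. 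Thus the Lemma reduces to the following claim: in the presence of the rank\nobreakdash-one element $\alpha$, the set $C$ contains no product $2$\nobreakdash-flat.

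I expect this last claim to be the heart of the matter, and the only place the rank\nobreakdash-one hypothesis is used. By \Cref{lem:hyper} the axis $a$ of $\rho(\alpha)$ lies in $C$, and its factor projections $l_i^\alpha=\pi_i(a)$ are axes of the rank\nobreakdash-one isometries $\rho_i(\alpha)$; in particular each $l_i^\alpha$ has the contraction property (so bounds no flat strip or half\nobreakdash-plane in $X_i$) and $\rho_i(\alpha)$ acts on $\partial X_i$ with north--south dynamics. Given a product $2$\nobreakdash-flat $m_1\times m_2\subset C$, the point $(q_1,q_2)$ with $q_i$ the point of $m_i$ closest to $l_i^\alpha$ stays at bounded distance from $a$ under all powers $\rho(\alpha)^n$, so the flats $\rho(\alpha)^n(m_1\times m_2)\subset C$ \emph{track} the axis $a$; feeding this, together with the cocompactness of $C$ and the strong divergence of geodesics from $l_i^\alpha$ forced by the contraction property, through the factor projections $\pi_i\colon C\to X_i$ (whose images are $\rho_i(\G)$\nobreakdash-cocompact convex subsets of $X_i$ containing $l_i^\alpha$) should produce a flat half\nobreakdash-plane bounded by some $l_i^\alpha$ in $X_i$ — contradicting that $\rho_i(\alpha)$ is rank one. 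Turning this outline into a proof — in particular, controlling the limits so that the resulting flat is genuinely $\rho(\alpha)$\nobreakdash-invariant, or genuinely bounds the axis, rather than merely doing so up to sliding along $a$, and then extracting the forbidden half\nobreakdash-plane — is, I expect, where the real work lies; by contrast the first two paragraphs are routine plane convexity and Arzel\`a--Ascoli.
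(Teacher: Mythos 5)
Your reduction is fine as far as it goes: the plane-convexity dichotomy in the first paragraph and the Arzel\`a--Ascoli renormalization in the second are both sound, and they correctly reduce the lemma to the claim that $C$ contains no product $2$-flat $m_1\times m_2$. But that claim is exactly where your proof stops being a proof. The third paragraph is an acknowledged sketch (``should produce a flat half-plane\dots where the real work lies''), and I don't believe the route you indicate closes. The flats $\rho(\alpha)^n(m_1\times m_2)$ do return to a bounded neighborhood of the axis $a$ (the nearest-point distance $d(\rho_i(\alpha)^n q_i, l_i^\alpha)=d(q_i,l_i^\alpha)$ is constant), but this is not contradictory in itself: already in $\mathbb H^2$, the translates $\alpha^n m$ of a geodesic $m$ disjoint from the axis of $\alpha$ stay at constant distance from that axis while both endpoints converge to $\alpha^+$. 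Moreover, a product $2$-flat in $X_1\times X_2$ projects to a single geodesic $m_i$ in each factor, so no amount of translating and passing to limits of these flats manufactures a \emph{half-plane inside $X_i$} bounded by $l_i^\alpha$; you would need a three-dimensional family in the product to see that. More tellingly, the obstruction to $m_1\times m_2\subset C$ has nothing to do with the rank-one hypothesis (which the lemma does not assume, and which the paper only uses later, in the analogue of \Cref{lem:thm:main_rephrased}): it is a discreteness statement.

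The missing ingredient is an orbit-counting bound, and it is what the paper's proof runs on. Since the action of $\G$ on $X_1$ is properly discontinuous, the orbit $\rho_1(\G)x_1$ is uniformly separated, and the set of orbit points lying within distance $D$ of a fixed geodesic line $l_1\subset X_1$ and within distance $r$ of $x_1$ has cardinality $O(r)$ (each unit annulus of the $D$-tube around $l_1$ has bounded diameter, hence contains boundedly many orbit points). Combined with $D$-density of $\rho(\G)x$ in $C$, this gives $\limsup_r \area\bigl(C_F\cap B_r(x)\bigr)/r<\infty$ for \emph{any} product flat $F$ meeting $C$. This single estimate kills everything at once: a product $2$-flat in $C$, a half-plane in $C_F$, or even a sequence $w_n\in C_F$ with $d(w_n,\bar l)\to\infty$ (via the triangles $\operatorname{Hull}(l_n\cup\{w_n\})$, whose areas grow superlinearly) are all incompatible with linear area growth. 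So the paper never needs your limiting-flat step at all: it goes directly from the linear area bound to the Hausdorff-distance conclusion by elementary plane geometry. If you want to salvage your own architecture, replace the third paragraph by this counting argument: a product $2$-flat $m_1\times m_2\subset C$ would contain $\gtrsim r^2$ orbit points of $\rho(\G)x$ in $B_r$, all of whose $X_1$-components lie in a fixed tube around $m_1$, contradicting the $O(r)$ bound above.
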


\begin{proof}
 For $D>0$, let $f_D: [0,\infty) \to [0,\infty)$,
 \[
  f_D(r) = \frac{\#\{\g\in\G :\ d(\rho(\g) x,F) \le D,\ d(x,\rho(\g) x)\le r \}}{r}.
 \]
 
 \begin{claim}\label{claim:one} We have that
 \begin{equation*}
     \limsup_{r\to\infty} \frac{f_D(r)}{r} <\infty.
 \end{equation*}
 \end{claim}

 \begin{proof}[Proof of claim]
     Let $\bar f_D: [0,\infty) \to [0,\infty)$,
 \[
  \bar f_D(r) = \frac{\#\{\g\in\G :\ d_{1}(\rho_{1}(\g) x_1,l_1) \le D,\ d_{1}(x_1,\rho_1(\g) x_1)\le r \}}{r},
 \]
 where $x_1$ and $l_1$ are as above (see \eqref{eqn:flatF}).
 
 Note that $d(\rho(\g) x,F) \le D$ implies $d_{1}(\rho_{1}(\g) x_1,l_1) \le D$, and
 similarly, $d(x,\rho(\g) x)\le r$  implies  $d_{1}(x_1,\rho_1(\g) x_1)\le r$.
 Consequently, we have that $f_D\le \bar f_D$. Thus, in order to show that $\limsup_{r\to\infty} {f_D(r)}/{r} <\infty$, it will be sufficient to show that $\limsup_{r\to\infty} \bar f_D(r)/r <\infty$. 
 
 Split $l_1$ as a union of geodesic rays $l^{+}_1$, $l_1^-$ emanating from $x$.
 Then, for all $n\in \N$, the set
 \[
  R^\pm_n = \{ y\in X_1 :\ d_{1}(y,l^\pm_1) \le D, 
  \ n\le d_{1}(y,x_1)\le n+1\} \quad \subset X_1
 \]
 has a diameter bounded above by $2D + 1$.
 Since the $\rho_1(\G)$-orbit of the point $x_1\subset X_1$ is uniformly separated in $X_1$, there exists $N>0$ such that for all $n\in\N$, the sets $R^\pm_n$ contain at most $N$ points from $\rho_1(\G) x_1$.
Therefore, $\bar f_D(n) \le  nN$ for all $n\in \N$ and hence $\limsup_{r\to\infty} \bar f_D(r)/r \le N$, which proves the claim.
 \end{proof}

\begin{claim}\label{claim:two} We have that
\begin{equation*}
 \limsup_{r\to\infty}\frac{\mathrm{area}(C_F \cap B_r(x))}{r} <\infty,
\end{equation*}
where $\mathrm{area}$ denotes the standard Euclidean area in $F \cong \R^2$.
\end{claim}

\begin{proof}[Proof of claim]
    Since $C/\G$ is compact, the orbit $\rho(\G)x$ is $D$-dense in $C$, that is, for any point $y\in C$, $d(y,\rho(\G)x)\leq D$ for some constant $D >0$.
 Hence, $C_F \cap B_r(x)$ is covered by $D$-balls centered at the orbit points in $\rho(\G) x$, which lie in the $D$-neighborhood of $C_F \cap B_r(x)$. 
 The intersection of each ball with $F$ is contained in a Euclidean disk of radius $D$ in $F$, and thus has a maximum area of $2\pi D^2$.
 By \Cref{claim:one}, we have that
 $$ \limsup_{r\to\infty}\frac{\mathrm{area}(C_F \cap B_r(x))}{r}\leq \limsup_{r\to\infty}2\pi D^2 \dfrac{f_D(r)}{r}<\infty.$$
 This proves the claim.
\end{proof}

Now we return the proof of the lemma.
Since $C_F$ is a convex subset of $X_1\times X_2$ of infinite diameter, it contains a geodesic ray $l$ emanating from the basepoint $x$.

We split the proof into two cases:

\medskip\noindent
{\bf Case 1.}
Suppose for now that $C_F$ does not contain the bi-infinite geodesic $\bar l$ extending $l$. In this case, we show that $C_F$ is within a finite Hausdorff distance from $l$. We will argue by contradiction:

Suppose that the Hausdorff distance between $C_F$ and $l$ is infinite. 
Then, there exists a sequence of points $w_n\in  C_F$ such that $d(w_n, l)\rightarrow \infty$, as $n\to\infty$.
If $\liminf_{n\to\infty} d(w_n,\bar l)$ were finite, then due to the convexity of $C_F$, we would have $\bar l\subset C_F$, which would contradict our initial assumption that $C_F$ does not contain the bi-infinite geodesic $\bar l$ extending $l$. Thus,
$d(w_n,\bar l) \to \infty$ as $n\to \infty$.  Let $v_n\in \bar l$ be the nearest point projection of $w_n$ to $\bar l$. So, $d(w_n, v_n)\rightarrow \infty$ as $n\to\infty$. Let $r_n:= d(x, w_n)$ and  $l_n=B_{r_n+1}(x)\cap l$.  By the convexity of $C_F$, the triangle $\triangle_n$ with one side $l_n$ and vertex $w_n$ is contained in $C_F$. We have that
\[
\dfrac{\area(C_F\cap B_{r_{n}+1}(x))}{r_n+1}
\geq \dfrac{\area\triangle_n}{r_{n}+1}
=\dfrac{ d(w_n, \bar l) \operatorname{length}(l_n)}{2 (r_n+1)}=\dfrac{d(w_n, \bar l)}{2}.
\]
Since the quantity $\dfrac{d(w_n, \bar l)}{2} \to \infty$ as $n\rightarrow \infty$,
we have a contradiction with \Cref{claim:two}. 

\medskip\noindent
{\bf Case 2.}
Suppose that $C_F$ contains the bi-infinite geodesic extension $\bar l$ of $l$.
If, to the contrary, $C_F$ is not within a finite Hausdorff distance away from $\bar l$,
then there exists a sequence $w_n\in C_F$ such that $d(w_n,\bar l)\to \infty$ as $n\to\infty$.
Due to the convexity of $C_F$, it follows that $C_F$ encloses the convex hull $C_n$ of $\bar l$ and $w_n$, which is a bi-infinite rectangular strip with $\bar l$ as one of the sides.
Since the width of $C_n$, $d(w_n,\bar l)$, goes to infinity, it follows that $C_F$ contains a half-plane.
Thus, the area of $C_F\cap B_r(x)$ grows quadratically with $r$, contradicting \Cref{claim:two} again.

\medskip

The proof of the lemma is now complete.
\end{proof}

As  $\rho_i(\G)$ acts co-compactly on $X_i$, each infinite order element $\g\in \G$ acts on $X_i$ as a hyperbolic isometry  (see Lemma \ref{lem:hyper}).
That is, there exists a $\rho_i(\g)$-invariant complete geodesic axis $l_{\rho_i(\g)} \subset  X_i$, 
such that $\rho_i(\g)$ acts by a translation along $l_{\rho_i(\g)}$. 

\begin{lemma}\label{lem:flat}
    If $\rho(\g)$ is a hyperbolic isometry of $X_1\times X_2$ and $l_{\rho(\g)}\subset X_1\times X_2$ is an axis of $\rho(\g)$, then there exists a unique axis $l_{\rho_i(\g)} \subset X_i$ for $\rho_i(\g)$ with $i=1,2$, such that 
    \begin{equation}\label{eqn:axis_in_flat}
 l_{\rho(\g)} \subset F_{\rho(\g)} \coloneqq l_{\rho_1(\g)} \times l_{\rho_2(\g)}.
\end{equation}

Moreover, $\slp(\rho(\g))=\dfrac{\ell(\rho_1(\g))}{\ell(\rho_2(\g))}$. 
\end{lemma}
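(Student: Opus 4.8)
The plan is to exploit the product structure of $X_1 \times X_2$ together with the classification of isometries. First I would recall that a hyperbolic isometry $g = (g_1, g_2)$ of a product $X_1 \times X_2$ acts on each factor as a semisimple isometry, and since $\rho(\g)$ is hyperbolic with axis $l_{\rho(\g)}$, the projections $r_i$ of $l_{\rho(\g)}$ to $X_i$ are each either a point or a geodesic line. The key observation is that $\rho(\g)$ cannot fix a point (it is hyperbolic), and more precisely $\ell(\rho(\g))^2 = \ell(\rho_1(\g))^2 + \ell(\rho_2(\g))^2 > 0$ — actually the right framework is: the displacement function of $\rho(\g)$ on $X_1 \times X_2$ is $d_{\rho(\g)}(y_1,y_2)^2 = d_{\rho_1(\g)}(y_1)^2 + d_{\rho_2(\g)}(y_2)^2$, so its infimum is attained exactly when both coordinate displacement functions are simultaneously minimized. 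Since $\rho_i(\g)$ is hyperbolic (by \Cref{lem:hyper}, as $\rho_i(\G)$ acts convex co-compactly, indeed co-compactly), $d_{\rho_i(\g)}$ attains its minimum $\ell(\rho_i(\g))$ precisely on the union of axes of $\rho_i(\g)$, which is a closed convex set $\mathrm{Min}(\rho_i(\g)) = Y_i \times \R$ splitting off a line. Hence $\mathrm{Min}(\rho(\g)) = \mathrm{Min}(\rho_1(\g)) \times \mathrm{Min}(\rho_2(\g))$, and any axis $l_{\rho(\g)}$ of $\rho(\g)$ lies in this set.

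Next I would argue that an axis $l_{\rho(\g)}$, being a $\rho(\g)$-invariant geodesic line inside $\mathrm{Min}(\rho_1(\g)) \times \mathrm{Min}(\rho_2(\g))$, projects to a $\rho_i(\g)$-invariant geodesic (or point) $\pi_i(l_{\rho(\g)})$ in $\mathrm{Min}(\rho_i(\g))$. If this projection were a single point, then $\rho_i(\g)$ would fix that point, contradicting hyperbolicity of $\rho_i(\g)$ (each $\rho_i(\g)$ has positive translation length since it is hyperbolic — here one uses that infinite-order elements act as hyperbolics, and $\g$ has infinite order because $\rho(\g)$ is hyperbolic). Therefore each $\pi_i(l_{\rho(\g)})$ is a genuine $\rho_i(\g)$-invariant complete geodesic, hence an axis $l_{\rho_i(\g)}$ of $\rho_i(\g)$ by the characterization in Section~\ref{sec:pre} (any $g$-invariant complete geodesic is an axis). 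Setting $F_{\rho(\g)} = l_{\rho_1(\g)} \times l_{\rho_2(\g)}$, we get $l_{\rho(\g)} \subset F_{\rho(\g)}$ since the coordinate projections of $l_{\rho(\g)}$ land in $l_{\rho_1(\g)}$ and $l_{\rho_2(\g)}$ respectively. For uniqueness of the $l_{\rho_i(\g)}$ so obtained: distinct points on $l_{\rho(\g)}$ project to distinct points in each factor (a geodesic in a product has both coordinate projections nonconstant once we know neither is constant), so $l_{\rho_i(\g)} = \pi_i(l_{\rho(\g)})$ is forced by $l_{\rho(\g)}$; the axis $l_{\rho_i(\g)}$ might not be the unique axis of $\rho_i(\g)$ in $X_i$, but it is uniquely determined by the chosen $l_{\rho(\g)}$, which is the assertion.

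For the slope formula: parametrize $l_{\rho(\g)}$ by arc length; then $\rho(\g)$ translates it by $\ell(\rho(\g))$, and the induced parametrizations of $l_{\rho_i(\g)}$ (constant speed $v_i$, with $v_1^2 + v_2^2 = 1$) are translated by $\rho_i(\g)$, which must therefore equal the translation length $\ell(\rho_i(\g))$ along that axis; comparing, $v_i = \ell(\rho_i(\g))/\ell(\rho(\g))$. By the definition of slope via \eqref{eqn:slp_ray}, $\slp(\rho(\g)) = \slp(l_{\rho(\g)}) = v_1/v_2 = \ell(\rho_1(\g))/\ell(\rho_2(\g))$ (and this is consistent with the degenerate cases, which are excluded here since both $\rho_i(\g)$ are hyperbolic so both $v_i \neq 0$). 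The main obstacle I anticipate is the careful handling of the non-uniqueness of axes of $\rho_i(\g)$: $\mathrm{Min}(\rho_i(\g))$ can be a nontrivial product $Y_i \times \R$, so the cleanest route is to avoid choosing axes independently and instead let the given axis $l_{\rho(\g)}$ dictate the coordinate axes via projection, using the product-splitting of $\mathrm{Min}(\rho(\g))$; verifying that $\pi_i$ maps geodesics in the product to geodesics in $\mathrm{Min}(\rho_i(\g))$ and that $\rho(\g)$-invariance passes to $\rho_i(\g)$-invariance is where I would be most careful, invoking \cite[Prop. I.5.3]{Bridson-Haefliger} and the structure theory of $\mathrm{Min}$ from \cite[Ch. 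II.6]{Bridson-Haefliger}.
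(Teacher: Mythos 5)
Your proof is correct and follows essentially the same route as the paper: project the given axis $l_{\rho(\g)}$ to each factor, observe that the projections are $\rho_i(\g)$-invariant complete geodesics (hence axes, and uniquely determined by $l_{\rho(\g)}$), and read off the slope from the translation amounts $\ell(\rho_i(\g))$ along those axes. The detour through the splitting $\mathrm{Min}(\rho(\g)) = \mathrm{Min}(\rho_1(\g)) \times \mathrm{Min}(\rho_2(\g))$ is not needed for this, but it is harmless, and your explicit ruling-out of a constant projection via hyperbolicity of $\rho_i(\g)$ is in fact slightly more careful than the paper's one-line justification.
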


\begin{proof}
Let $l_i$ be the projection of $l_{\rho(\g)}$ to $X_i$ where $i=1,2$. Note that $l_i$ is not a point as $l_{\rho(\g)}$ is $\rho(\g)$-invariant. So both $l_1$ and  $l_2$ are complete geodesic lines in $X_1, X_2$ respectively. Moreover, $l_i$ is $\rho_i(\g)$-invariant. So, $l_i$ is an axis for $\rho_i(\g)$ for $i=1,2$. 
That is, for any point $x_i\in l_i$, $d_i(x_i, \rho_i(\g) x_i)=\ell(\rho_i(\g))$. 


Given $(x, y)\in l_{\rho(\g)}$, then $\slp(\rho(\g))=\dfrac{d_1(x, \rho_1(\g) x)}{ d_2(y, \rho_2(\g) y)}=\dfrac{\ell(\rho_1(\g))}{\ell(\rho_2(\g))}.$
\end{proof}


Let $\alpha\in\G$ be an infinite order element such that $\rho_i(\alpha)$ is a rank one isometry of $X_i$ for $i=1,2$.
\Cref{thm:main} can be rephrased as follows:

\begin{theorem}\label{thm:main_rephrased}
    For all infinite order elements $\beta\in\G$,
    $\slp(\rho(\alpha)) = \slp(\rho(\beta))$.
\end{theorem}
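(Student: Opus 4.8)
The strategy is to combine the linear area growth of $C_F$ (\Cref{claim:two}) with the fact that $\rho(\alpha)$ and $\rho(\beta)$ both have axes inside $C$, and to play off the north-south dynamics of the rank one isometries $\rho_i(\alpha)$ against any hypothetical axis of $\rho(\beta)$ of a different slope. First I would set up the flat: choose the base-point $x\in C$ to lie on an axis of $\rho(\alpha)$, and take $l_i$ in \eqref{eqn:flatF} to be the projection $l_{\rho_i(\alpha)}$, so that the $2$-flat $F = l_{\rho_1(\alpha)}\times l_{\rho_2(\alpha)}$ contains an axis of $\rho(\alpha)$; by \Cref{lem:flat} this axis has slope $\ell(\rho_1(\alpha))/\ell(\rho_2(\alpha)) = \slp(\rho(\alpha))$. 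Since this axis lies in $C\cap F = C_F$ and is bi-infinite, $C_F$ has infinite diameter, so \Cref{lem:thm:main} applies: $C_F$ is within finite Hausdorff distance of a geodesic ray or line $\lambda$ in $F$, and since it contains the full axis of $\rho(\alpha)$ we must be in the "geodesic line" case with $\lambda$ the axis itself — in particular $\slp(\lambda) = \slp(\rho(\alpha))$.

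Next I would bring in $\beta$. By \Cref{lem:hyper}, $\rho(\beta)$ has an axis $l_{\rho(\beta)}\subset C$, and by \Cref{lem:flat} it lies in the flat $F_{\rho(\beta)} = l_{\rho_1(\beta)}\times l_{\rho_2(\beta)}$, with $\slp(\rho(\beta)) = \ell(\rho_1(\beta))/\ell(\rho_2(\beta))$. The goal is to force $\slp(\rho(\beta)) = \slp(\rho(\alpha))$. The key mechanism is this: the rank one hypothesis gives north-south dynamics for $\rho_i(\alpha)$ on $\partial X_i$. If the slope of $l_{\rho(\beta)}$ differed from that of $l_{\rho(\alpha)}$, then iterating $\rho(\alpha)^n$ (or $\rho(\alpha)^{-n}$) and using that $\rho(\alpha)$ preserves $C$, I would produce, inside $C_F$ or a nearby flat inside $C$, a family of geodesic segments of length $\to\infty$ that are uniformly transverse to $\lambda$ — because the endpoints of $\rho(\alpha)^n l_{\rho(\beta)}$ in $\partial X_i$ converge to the attracting/repelling fixed points $\rho_i(\alpha)_{\pm}$, which are exactly the endpoints of $l_{\rho_i(\alpha)}$, so the pushed-forward axes "rotate" toward $l_{\rho(\alpha)}$ while keeping a definite transverse extent coming from the slope mismatch. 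Taking convex hulls of these translated axes together with $\lambda$ inside the convex set $C$ and intersecting with a large flat, one gets a region whose area in a ball $B_r$ grows superlinearly (indeed a half-plane's worth in the limit), contradicting \Cref{claim:two}. Hence $\slp(\rho(\beta)) = \slp(\rho(\alpha))$, which is exactly \Cref{thm:main_rephrased}; unwinding \Cref{lem:flat} then gives $\ell(\rho_1(\gamma)) = \lambda\,\ell(\rho_2(\gamma))$ with $\lambda = \slp(\rho(\alpha))$ for every infinite order $\gamma$, proving \Cref{thm:main}. (Finite order elements are handled by $\ell = 0$ on both sides, consistent with hyperbolicity of $\rho(\gamma)$ in \Cref{lem:hyper}.)

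The main obstacle, and the step requiring the most care, is making the "rotation by $\rho(\alpha)^n$ creates superlinear area inside $C$" argument precise. One has to control where the translates $\rho(\alpha)^n l_{\rho(\beta)}$ sit: a priori they wander off in the $\lambda$-direction, so the convex hull of $\lambda$ with such a translate could be a thin wedge whose area in $B_r(x)$ is still only linear. The fix is to work with the *common perpendicular* or with suitably chosen finite segments of $\rho(\alpha)^n l_{\rho(\beta)}$ of length comparable to $n\ell(\rho(\alpha))$ positioned near $x$, and to quantify the transverse displacement using the slope: in $F \cong \R^2$, a geodesic of slope $s \ne \slp(\lambda)$ passing within bounded distance of $x$ diverges linearly from $\lambda$, at a rate bounded below in terms of $|s - \slp(\lambda)|$; translating by the Euclidean isometry induced by $\rho(\alpha)$ on $F$ (a translation along $\lambda$) preserves this rate. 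One then checks the triangles/strips spanned by these segments and $\lambda$ lie in $C_F$ (convexity of $C$, plus that both $\lambda\subset C_F$ and the translated segments lie in $C$ since $\rho(\alpha)C = C$), and that their areas in $B_{r}(x)$ sum to something growing like $r^2$ — contradicting \Cref{claim:two}. A secondary subtlety is that $l_{\rho(\beta)}$ need not lie in the single flat $F$; one either enlarges $F$ to contain a flat through both axes (using that parallel sets of hyperbolic isometries in $C$ split off Euclidean factors) or re-runs \Cref{claim:two}'s counting argument for the flat $F_{\rho(\beta)}$ and its $\rho(\alpha)$-translates directly.
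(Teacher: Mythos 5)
Your overall strategy — exploit north--south dynamics of the rank one element $\alpha$ together with the linear area growth of $C_F$ — is the right circle of ideas, and your setup of the flat $F_{\rho(\alpha)}$ and the reduction via \Cref{lem:flat} matches the paper. However, there are two genuine gaps. First, your mechanism is entirely a ``rotation'' argument, and it says nothing about the case in which the ideal endpoints of $l_{\rho_i(\beta)}$ coincide with those of $l_{\rho_i(\alpha)}$ for both $i=1,2$. In that case there is no rotation: the endpoints are already fixed by $\rho_i(\alpha)$, the translated axes stay parallel to $l_{\rho_i(\alpha)}$, and no transverse extent is created. The paper must treat this case separately (\Cref{lem:thm:main_rephrased}), using the splitting $P(l_i)=\operatorname{CS}(l_i)\times\R$ of the parallel set, the finiteness of $\operatorname{diam}\operatorname{CS}(l_i)$ coming from the rank one hypothesis, and a quasi-isometry contradiction ($\langle\alpha,\beta\rangle$ would be quasi-isometric to both $\R$ and $\R^2$ if the slopes differed). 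This is a separate idea that your proposal does not supply.

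Second, in the generic case your core step — take convex hulls of $\lambda$ with the translates $\rho(\alpha)^n l_{\rho(\beta)}$ and extract superlinear area ``in a large flat'' — does not go through as stated. The translates are axes of $\alpha^n\beta\alpha^{-n}$; they do not lie in $F_{\rho(\alpha)}$, they do not pass near $x$ (both of their ideal endpoints in each factor converge to $\alpha_i^+$, so the whole line escapes toward $\alpha^+$ and its distance from $x$ tends to infinity), and the convex hull of $\lambda$ with such a translate is a convex subset of $X_1\times X_2$ that is not contained in any $2$-flat, so the Euclidean area of \Cref{claim:two} is not defined for it. Your proposed fixes (enlarging $F$ to a flat containing both axes, or re-running the counting for $F_{\rho(\beta)}$) do not resolve this: there is in general no $2$-flat containing both axes. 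The paper's resolution is different and avoids any new area estimate: for each $k$ it takes the ray $b_k$ from the \emph{fixed} basepoint $x$ asymptotic to $\rho(\alpha)^k\cdot l_{\rho(\beta)}\vert_{[0,\infty)}$; by \Cref{lemma:slp} each $b_k$ has slope $\sigma=\slp(\rho(\beta))$, the $b_k$ converge to a ray $b$ of slope $\sigma$ whose image lies in $C\cap F_{\rho(\alpha)}$, and then \Cref{lem:thm:main} (which says $C\cap F_{\rho(\alpha)}$ is Hausdorff-close to a single ray or line in the flat) forces $\sigma=\slp(\rho(\alpha))$. You would also need the paper's small adjustment of replacing $\alpha$ by $\alpha^{-1}$ when $\beta_2^+=\alpha_2^-$, so that the convergence of endpoints actually holds in both factors.
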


Before discussing the proof, we prove a special case as follows:

\begin{lemma}\label{lem:thm:main_rephrased}
    Let $\beta\in\G$ be an infinite order element.
    Suppose that $\rho_i(\beta)$ fixes the attractive and repulsive fixed points of $\rho_i(\alpha)$ in $\geo X_i$ for $i=1,2$.
    Then $\slp(\rho(\beta)) = \slp(\rho(\alpha))$.
\end{lemma}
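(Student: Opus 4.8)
The plan is to build a single $2$-flat $\hat F\subset X_1\times X_2$ that traps the axes of both $\rho(\alpha)$ and $\rho(\beta)$ up to bounded error, and then to invoke the linear area‑growth phenomenon underlying \Cref{lem:thm:main}: a convex planar region containing two lines of different slopes must be the whole plane, which has quadratic area growth.

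\emph{Set-up.} Since $\rho_1(\alpha)$ has rank one, $\alpha$ has infinite order, so by \Cref{lem:hyper} there is an axis $l_{\rho(\alpha)}\subset C$ of $\rho(\alpha)$; by \Cref{lem:flat} its factor projections $l_i:=\pi_{X_i}(l_{\rho(\alpha)})$ are axes of $\rho_i(\alpha)$, and $\hat F:=l_1\times l_2$ is a $2$-flat containing $l_{\rho(\alpha)}$. Because $\rho_i(\alpha)$ is rank one, $l_i$ bounds no flat half-plane, so by the flat strip theorem its parallel set splits as $P(l_i)\cong l_i\times C_i$ with $C_i$ a \emph{bounded} closed convex set (if $C_i$ contained a ray we would get a flat half-plane along $l_i$); put $\kappa:=\diam C_1+\diam C_2<\infty$.

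\emph{Claim A.} Every axis of $\rho_i(\beta)$ lies in $P(l_i)$; consequently the axis $l_{\rho(\beta)}\subset C$ of $\rho(\beta)$ (\Cref{lem:hyper}, \Cref{lem:flat}) lies in the $\kappa$-neighborhood of $\hat F$, and its projection $n_\beta:=\pi_{\hat F}(l_{\rho(\beta)})$ is a Euclidean line in $\hat F$ of slope $\slp(\rho(\beta))$ at Hausdorff distance $\le\kappa$ from $l_{\rho(\beta)}$. Indeed, $\rho_i(\beta)$ fixes the two ideal endpoints $\alpha_i^\pm$ of $l_i$, so $\rho_i(\beta)(l_i)$ is a geodesic with the same endpoints at infinity as $l_i$; two complete geodesics with the same pair of ideal endpoints are parallel (the distance between them is convex and bounded, hence constant, giving a flat strip), so $\rho_i(\beta)$ preserves $P(l_i)$. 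As $C_i$ is bounded, the only complete geodesics of $P(l_i)\cong l_i\times C_i$ are the horizontal ones, so $\rho_i(\beta)|_{P(l_i)}$ splits as a product of an isometry of $l_i\cong\R$ and an (elliptic, since $C_i$ is bounded) isometry of $C_i$; the $\R$-factor is a nontrivial translation, for otherwise $\rho_i(\beta)$ would fix a point, contradicting that it is hyperbolic (see the discussion preceding \Cref{lem:flat}). This exhibits an axis of $\rho_i(\beta)$ inside $P(l_i)$, and since all axes of $\rho_i(\beta)$ are mutually parallel, they all lie in $P(l_i)$, hence within $\diam C_i$ of $l_i$. In particular $\pi_{X_i}(l_{\rho(\beta)})$ is an axis of $\rho_i(\beta)$ of the form $l_i\times\{c\}$ with $c\in C_i$; projecting $l_{\rho(\beta)}\subset\pi_{X_1}(l_{\rho(\beta)})\times\pi_{X_2}(l_{\rho(\beta)})$ factorwise onto $l_1\times l_2$ is an isometry on each factor and preserves both factor-speeds, so $n_\beta$ is a Euclidean line of the same slope $\slp(\rho(\beta))=\ell(\rho_1(\beta))/\ell(\rho_2(\beta))$, with each point moved by $\le\kappa$.

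\emph{Area bound.} Fix $x\in C$; since $C/\rho(\G)$ is compact, $\rho(\G)x$ is $D$-dense in $C$ for some $D>0$. Every $q\in N_\kappa(C)\cap\hat F\cap B_r(x)$ lies within $D+\kappa$ of some $\rho(\g)x$ with $d(x,\rho(\g)x)\le r+D+\kappa$ and $d_1(\pi_{X_1}(\rho(\g)x),l_1)\le D+\kappa$; by the uniform separation of $\rho_1(\G)\,\pi_{X_1}(x)$ in $X_1$, the counting argument from the proof of \Cref{lem:thm:main} bounds the number of such $\g$ by $O(r)$, each contributing a subset of $\hat F$ of area $\le 2\pi(D+\kappa)^2$. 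Hence $\area\big((N_\kappa(C)\cap\hat F)\cap B_r(x)\big)=O(r)$.

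\emph{Conclusion.} Suppose $\slp(\rho(\beta))\ne\slp(\rho(\alpha))$. By Claim~A — applied to $\beta$, and trivially to $\alpha$ with $n_\alpha:=l_{\rho(\alpha)}\subset\hat F\cap C$ — we obtain two Euclidean lines $n_\alpha,n_\beta\subset\hat F$ of distinct finite positive slopes, both contained in $N_\kappa(C)$. Now $N_\kappa(C)\cap\hat F$ is a convex subset of the Euclidean plane $\hat F$ (as $N_\kappa(C)$ is a sublevel set of the convex function $d(\cdot,C)$) containing the two non-parallel lines $n_\alpha,n_\beta$; writing any $z\in\hat F$ as the midpoint of a sufficiently far point on $n_\alpha$ and a sufficiently far point on $n_\beta$ shows $N_\kappa(C)\cap\hat F=\hat F$. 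But $\area(\hat F\cap B_r(x))\ge\pi\big(r^2-d(x,\hat F)^2\big)$ grows quadratically, contradicting the linear bound. Therefore $\slp(\rho(\beta))=\slp(\rho(\alpha))$.

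The step I expect to be the crux is Claim~A: extracting an axis of $\rho_i(\beta)$ inside the parallel set $P(l_i)$ and deducing that \emph{all} axes of $\rho_i(\beta)$ — in particular the factor projections of the $\rho(\beta)$-axis living in $C$ — stay uniformly close to $l_i$. This is exactly where the rank-one hypothesis on $\rho_i(\alpha)$ is indispensable, via the boundedness of the cross-section $C_i$; everything afterward is bookkeeping together with the area dichotomy already present in \Cref{lem:thm:main}.
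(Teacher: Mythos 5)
Your proof is correct, and its first half (Claim A) coincides with the paper's own argument: both exploit that the rank-one hypothesis forces the cross-section of the parallel set $P(l_i)$ to be bounded, that $\rho_i(\beta)$ preserves $P(l_i)$ because it fixes $\alpha_i^\pm$, and that the induced elliptic action on the bounded cross-section produces an axis of $\rho_i(\beta)$ parallel to $l_i$. Where you genuinely diverge is the endgame. The paper derives the contradiction from quasi-isometry invariance: if the slopes differed, the translation vectors of $\alpha$ and $\beta$ on the $\R\times\R$ factor of $P(l_1)\times P(l_2)$ would be linearly independent, so $\langle\alpha,\beta\rangle$ would act geometrically on $\R^2$ and be quasi-isometric to $\R^2$, while it also acts geometrically on $P(l_1)$, which is quasi-isometric to $\R$. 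You instead push both axes (up to error $\kappa$) into the single flat $\hat F=l_1\times l_2$ and reuse the linear-versus-quadratic area dichotomy of \Cref{claim:one} and \Cref{claim:two}: a convex planar set containing two non-parallel lines is the whole plane, which is incompatible with the $O(r)$ area bound. Your route has the virtue of recycling machinery already built for \Cref{lem:thm:main} (indeed, one could shortcut your area-bound paragraph by applying \Cref{lem:thm:main} directly to the closed $\kappa$-neighborhood of $C$, which is again a $\rho(\G)$-invariant convex set with compact quotient); the paper's route avoids any thickening of $C$ at the cost of invoking the Schwarz--Milnor lemma. One small slip: the inequality $\area(\hat F\cap B_r(x))\ge\pi\bigl(r^2-d(x,\hat F)^2\bigr)$ goes the wrong way (the nearest-point projection makes the angle at the foot at least $\pi/2$, so CAT(0) comparison gives $d(x,y)^2\ge d(x,\hat F)^2+d(\pi_{\hat F}(x),y)^2$, which yields an \emph{upper} bound of that form); this is harmless, since you may simply take the basepoint on $l_{\rho(\alpha)}\subset\hat F\cap C$, where $\hat F\cap B_r(x)$ contains a genuine Euclidean $r$-disk and the quadratic growth is immediate.
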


\begin{proof}
Let $i=1,2$.
Pick an axis $l_{\rho(\alpha)}$ 
in $X_1\times X_2$ and let $l_i$ be the axis of $\rho_i(\alpha)$ obtained by projecting $l_\rho(\alpha)$ into $X_i$.
The parallel set\footnote{The {\em parallel set} $P(l)$ of a given complete geodesic $l$ in a metric space $Y$ is the union of all complete geodesics in $Y$ parallel (i.e., bi-asymptotic) to $l$.} $P(l_i)$ of $l_i$
metrically decomposes as $\operatorname{CS}(l_i) \times \R$, where $\operatorname{CS}(l_i)$ is the cross-section of $P(l_i)$ (see \cite[Thm. II.2.14]{Bridson-Haefliger}). The cross-section $\operatorname{CS}(l_i)$ can be realized as a convex subset of $X_i$ as the set of all nearest point projections of a fixed $x\in l_i$ to the lines parallel to $l_i$.
Since $\rho_i(\alpha)$ is rank one, $l_i$ does not bound a flat half plane and hence $\operatorname{CS}(l_i)$ has a finite diameter.

Since $\rho_i(\beta)$ fixes the ideal endpoints of $l_i$, $\rho_i(\beta)$ preserves the family of lines parallel to $l_i$.
Hence, $P(l_i)$ is $\rho_i(\beta)$-invariant.
The action $\rho_i(\beta)$ on $P(l_i)$ induces an isometric action on $\operatorname{CS}(l_i)$ via the product decomposition $P(l_i) = \operatorname{CS}(l_i) \times \R$.
Since $\operatorname{CS}(l_i)$ is a CAT(0) metric space with a finite diameter, $\rho_i(\beta)$ fixes the center of $\operatorname{CS}(l_i)$.
This center corresponds to a line $l'_i$ parallel to $l_i$, which is then an axis of $\rho_i(\beta)$.
(In particular, $\rho_i(\beta)$ is rank one since $l'_i$ has contracting property.)
So, $\rho_i(\beta)$ acts on the $\R$ factor of $P(l_i) = \operatorname{CS}(l_i) \times \R$ by translation by an amount of $\pm\ell(\rho_i(\beta))$.

Now we finish the proof of the lemma by a contradiction.
Suppose that $\slp(\rho(\beta)) \ne \slp(\rho(\alpha))$.
Consider the properly discontinuous action of $\langle\alpha,\beta\rangle < \Gamma$ on $P(l_1)\times P(l_2)$ via $\rho$.
The projection $P(l_1)\times P(l_2) \to \R\times \R$, induced by the metric decomposition $P(l_i) = \operatorname{CS}(l_i) \times \R$, gives rise to an action of $\langle\alpha,\beta\rangle$ by translations on $\R\times \R$; more precisely, $\rho(\alpha) (a,b) = (a,b) + (\ell(\rho_1(\alpha)),\ell(\rho_2(\alpha)))$ and, similarly, $\rho(\beta) (a,b) = (a,b) + (\pm\ell(\rho_1(\beta)),\pm\ell(\rho_2(\beta)))$.
In particular, since $\slp(\rho(\beta)) \ne \slp(\rho(\alpha))$,
the image of $\langle\alpha,\beta\rangle$ in $\operatorname{Isom}(\R^2)$ is a lattice, i.e., $\langle\alpha,\beta\rangle$ acts geometrically on $\R^2$.
Hence, by the Schwarz-Milnor Lemma, $\langle\alpha,\beta\rangle$ is quasi-isometric to $\R^2$.

On the other hand, $\langle\alpha,\beta\rangle$ acts geometrically on $P(l_1)$ via $\rho_1$ and, since $P(l_1)$ is quasi-isometric to $\R$ (as $\operatorname{CS}(l_1)$ has a finite diameter), 
$\langle\alpha,\beta\rangle$ is quasi-isometric to $\R$.
This is a contradiction to the conclusion of the preceding paragraph.
\end{proof}

Now we return to:

\begin{proof}[Proof of \Cref{thm:main_rephrased}]
 Recall our assumption that  $\alpha \in\Gamma$ is an element such that $\rho_i(\alpha)$ is a rank one isometry of $X_i$ for $i=1,2$.
 We will show that for any infinite order element $\beta\in\Gamma$,
 $
     \slp (\rho(\beta)) = \slp (\rho(\alpha)).
 $

 In the $\rho(\G)$-invariant convex subset $C\subset X_1\times X_2$, we choose unit speed geodesic lines $l_{\rho(\alpha)}: \R\to C$ and $l_{\rho(\beta)}: \R\to C$, which are axes for $\rho(\alpha)$ and $\rho(\beta)$, respectively, and which are contained in unique flats of the form given by \eqref{eqn:axis_in_flat}:
 \begin{equation}
     l_{\rho(\alpha)} \subset F_{\rho(\alpha)} = l_{\rho_1(\alpha)} \times l_{\rho_2(\alpha)} 
     \quad\text{and}\quad
     l_{\rho(\beta)} \subset F_{\rho(\beta)} = l_{\rho_1(\beta)} \times l_{\rho_2(\beta)}.
 \end{equation}
 For convenience, we choose the origin of  $F_{\rho(\alpha)}$ as our base-point $x = (x_1,x_2)\in X_1\times X_2$; i.e., $x = l_{\rho(\alpha)}(0)$.

Let $\alpha_i^{\pm}, \beta_i^{\pm}\in \geo X_i$ denote the ideal endpoints of the geodesic axes $l_{\rho_i(\alpha)}$ and $l_{\rho_i(\beta)}$ for $i=1,2$. Based on the relative positions of these endpoints, we have the following two cases:
\begin{enumerate}
    \item For all $i\in\{1, 2\}$, we have $\{\alpha_i^{\pm}\}=\{\beta_i^{\pm}\}$. 
    \item For some $i\in\{1, 2\}$, we have $\{\alpha_i^{\pm}\}\ne\{\beta_i^{\pm}\}$.
    Without loss of generality, let us assume that $\beta_1^{+}\notin \{\alpha_1^{\pm}\}$. 
\end{enumerate}

In case (1), the conclusion $\slp(\rho(\alpha)) = \slp(\rho(\beta))$ follows from \Cref{lem:thm:main_rephrased}.

In case (2), we divide the proof into two subcases depending on the relative positions between $\beta_2^{+}$ and $\alpha_2^{-}$:
If $\beta_2^{+}$ is distinct from $\alpha_2^{-}$, then for both $i=1,2$, $\rho_i(\alpha^k)\cdot \beta_i^+ \to \alpha_i^+$ as $k\to\infty$. If $\beta_2^{+}$ is the same as $\alpha_2^{-}$, then $\rho_i(\alpha^{-k})\cdot \beta_i^+ \to \alpha_i^-$ as $k\to \infty$. In the former case, we will prove that $\slp(\rho(\alpha))=\slp(\rho(\beta))$.
In the latter case, we will prove $\slp(\rho(\alpha^{-1}))=\slp(\rho(\beta))$. Note that $\slp(\rho(\alpha))=\slp(\rho(\alpha^{-1}))$. So, in either case, we will have $\slp(\rho(\alpha))=\slp(\rho(\beta))$.

We only discuss the first subcase, i.e., we assume that $\beta_2^{+}\ne \alpha_2^{-}$.  The proof of the second subcase will be the same after replacing $\alpha$ by $\alpha^{-1}$. 

 Let $r_{k,i}: [0,\infty) \to X_i$ denote the unit speed geodesic ray  in $X_i$ emanating from the base-point $x_i$ and asymptotic to $\rho_i(\alpha^k)\cdot \beta_i^+$.
 Then the sequence of rays $(r_{k,i})$ converges (uniformly on compacts) to the ray $r_i \coloneqq l_{\rho_i(\alpha)}\vert_{[0,\infty)}$. 
 
 We claim that the sequence of maps
 \[
  r_{k,1}\times r_{k,2}: [0,\infty)^2 \to X_1\times X_2
 \]
 converges (uniformly on compacts) to $r_1 \times r_2$.
 Pick $T_1, T_2\in [0, \infty)$, and arbitrarily $\epsilon>0$. It suffices to prove that 
 $$d\bigl((r_{k, 1}(t_1), r_{k, 2}(t_2)), (r_1(t_1), r_2(t_2))\bigr)=\sqrt{d_1^2(r_{k, 1}(t_1), r_1(t_1))+d_2^2(r_{k, 2}(t_2), r_2(t_2))}<\epsilon$$
 for all $t_i\in [0,T_i]$ and all sufficiently large $k$. This follows from the fact that $r_{k, i}$ converges uniformly to $r_i$ for $i=1, 2$, and $d_i(r_{k, i}(t_i), r_i(t_i))<\epsilon/\sqrt 2$ for all $t_i\in [0,T_i]$ when $k$ is sufficiently large.

 For each $k$,  let $b_k :[0,\infty) \to C\subset X_1\times X_2$ be the unit speed ray emanating from $x$ and  asymptotic to $\rho(\alpha)^k\cdot l_{\rho(\beta)}\vert_{[0,\infty)}$.
 By \Cref{lemma:slp}, 
 $
     \slp(b_k) = \slp(\rho(\alpha)^k\cdot l_{\rho(\beta)}) = \slp(\l_{\rho(\beta)})\eqqcolon \sigma.
 $
 The (image of the) projection of $b_k$ to $X_i$ is the ray $r_{k,i}$.
 Thus, for all $k\in\N$,
 \[ 
 b_k(t) = \left (r_{k,1}\left (\frac{\sigma t}{\sqrt{1+\sigma^2}}\right),\, r_{k,2}\left (\frac{t}{\sqrt{1+\sigma^2}}\right)\right)
 \]
 By above, the sequence $b_k$ converges uniformly to the ray
 \[
  b(t) = \left (r_1\left (\frac{\sigma t}{\sqrt{1+\sigma^2}}\right),\,r_2\left (\frac{t}{\sqrt{1+\sigma^2}}\right)\right),
 \]
 whose slope is also $\sigma$.
 By the previous paragraph, the image of $b$ lies in $C \cap F_{\rho(\alpha)}$.
 Since $C\cap F_{\rho(\alpha)}$ contains an axis of $\rho(\alpha)$, \Cref{lem:thm:main} guarantees that $\sigma = \slp (\rho(\alpha))$.
\end{proof}

 \bibliography{bibliography.bib} 
\bibliographystyle{plain}

\vspace{.3in}

\noindent
{Max Planck Institute for Mathematics in the Sciences,
    Inselstra\ss e 22,
    04103 Leipzig, Germany
}

\noindent\texttt{subhadip.dey@mis.mpg.de}

\vspace{.2in}

\noindent
Department of Mathematics,
   The Ohio State University, 
   100 Math Tower, 231 W 18th Ave, Columbus, OH 43201, USA

\noindent\texttt{liu.11302@osu.edu}

\end{document}